\documentclass[11pt]{article}

\usepackage[a4paper,margin=1in]{geometry}

\usepackage{enumitem}
\usepackage{amssymb}
\usepackage{amsmath}
\usepackage{tikz}
\usepackage{tikz-cd}
\usetikzlibrary{arrows.meta, positioning}

\usepackage{setspace}
\usepackage{xurl}
\usepackage{hyperref}

\hypersetup{
  colorlinks=true,
  linkcolor=blue,
  citecolor=blue,
  urlcolor=blue,
  breaklinks=true
}
\usepackage{cleveref}
\crefname{equation}{}{}
\Crefname{equation}{}{}

\usepackage{amsthm}

\theoremstyle{plain}
\newtheorem{theorem}{Theorem}[section]

\theoremstyle{definition}

\newtheorem{corollary}[theorem]{Corollary}

\theoremstyle{remark}

\crefname{theorem}{theorem}{theorems}
\Crefname{theorem}{Theorem}{Theorems}

\usepackage{hyperref}
\usepackage{cleveref}

\hypersetup{
  colorlinks=true,
  linkcolor=blue,
  citecolor=blue,
  urlcolor=blue
}

\begin{document}

\begin{center}
    \vspace*{2cm}
  
    {\LARGE\bfseries Complex numbers: uncommon knowledge\par}
    \vspace{1.5cm}
  
    {\LARGE Ivan Penkov\par}
    \vspace{2.5cm}
  
\end{center}

    \noindent
    {\small\textbf{Abstract.} In this note we unveil some subfields of the field of complex numbers which are well known to specialists, but seem to remain hidden for a wider mathematical audience. The existence and constructions of these fields follow directly from famous results of the 20-th century, however standard graduate textbooks avoid these topics.\par}
    \vspace{0.5cm}

\section{Foreword}
Every educated person knows, or knows about, complex numbers. However, asking someone to explain, or give a definition of complex numbers, may be intimidating and most people will say something like ``imaginary numbers". A more mathematically fluent person may come up with the more precise "numbers of the form $a + bi$ where $a$ and $b$ are real numbers and $i = \sqrt{-1}$". Asking to explain what a real number is a futile task unless you are talking to someone with a major in Mathematics. In that case, there a chance to hear that real numbers form a complete Archimedean totally ordered field, and less chance to hear that real numbers are nothing but Dedekind cuts of the rational numbers.
\par
Professional mathematicians have the background necessary for understanding complex numbers in depth. Nevertheless, our experience from talking to numerous colleagues (number theorists and field theorists excluded!) is that the existence of certain subfields of the field of complex numbers often remains hidden, although it follows directly from famous theorems in common textbooks. In this article we recall some exciting constructions within the field of complex numbers which can serve as illustrations to standard theorems and deserve to be more widely known. We hope you have fun!

\section{$\mathrm{Aut} \mathbb{R}$ versus $\mathrm{Aut}\mathbb{C}$}
In what follows, $i$ stands always for $ i = \sqrt{-1}$, i.e., a number such that $i^2 = -1$. Recall that a \textit{field} is a set with two operations, addition and multiplication, satisfying the usual rules of numbers, in particular $a(b+c) = ab + ac$. Subtraction and division are possible, except dividing by $0$.
An \textit{automorphism} of a field $F$ is a bijection $\sigma:F \to F$ such that
\begin{itemize}
    \item $\sigma(a+b) = \sigma(a) + \sigma(b)$
    \item $\sigma(ab) = \sigma(a)\sigma(b)$.
\end{itemize}
Automorphisms form a group, the group operation being composition of automorphisms.

Here is a basic fact which can trigger the discussion.

\begin{theorem}
    The automorphism group $\textnormal{Aut} \mathbb{R}$ of the field of real numbers $\mathbb{R}$ is trivial, i.e. consists of the identity automorphism only.
\end{theorem}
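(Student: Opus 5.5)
Let $\sigma$ be an automorphism of $\mathbb{R}$. The plan has three steps: show that $\sigma$ fixes every rational number, show that $\sigma$ preserves the order of $\mathbb{R}$, and then use the density of $\mathbb{Q}$ in $\mathbb{R}$ to conclude that $\sigma$ is the identity.

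\emph{Step 1: $\sigma$ fixes $\mathbb{Q}$.} Additivity gives $\sigma(0)=\sigma(0+0)=2\sigma(0)$, so $\sigma(0)=0$. Since $\sigma$ is a bijection, $\sigma(1)\neq 0$, and then $\sigma(1)=\sigma(1)^2$ forces $\sigma(1)=1$. By induction, $\sigma(n)=n$ for all positive integers $n$. Additivity also gives $\sigma(-a)=-\sigma(a)$, which extends this to all integers. Finally, from $n\,\sigma(m/n)=\sigma(m)=m$ we get $\sigma(m/n)=m/n$. This step is purely formal.

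\emph{Step 2: $\sigma$ is order-preserving.} This is the key step, and the idea is to express order algebraically. In $\mathbb{R}$, a number $a$ satisfies $a\ge 0$ if and only if $a=b^2$ for some real $b$. If $a=b^2$, then $\sigma(a)=\sigma(b)^2\ge 0$, so $\sigma$ maps nonnegative numbers to nonnegative numbers. Applying this to $y-x$ shows that $x\le y$ implies $\sigma(x)\le\sigma(y)$. Injectivity upgrades this to strict inequalities: $x<y$ implies $\sigma(x)<\sigma(y)$. I expect this step to be the main obstacle, in the sense that it is the one non-formal idea: nothing in the definition of an automorphism mentions order or continuity. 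The point is that order in $\mathbb{R}$ is definable from multiplication alone.

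\emph{Step 3: conclusion.} Suppose $\sigma(x)\neq x$ for some real $x$, say $\sigma(x)<x$; the case $\sigma(x)>x$ is symmetric. Choose a rational $q$ with $\sigma(x)<q<x$, which exists by density of $\mathbb{Q}$ (equivalently, the Archimedean property). From $q<x$, Steps 1 and 2 give $q=\sigma(q)<\sigma(x)$, contradicting $\sigma(x)<q$. Hence $\sigma(x)=x$ for all $x$, so $\sigma$ is the identity and $\textnormal{Aut}\,\mathbb{R}$ is trivial.
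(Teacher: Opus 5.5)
Your proof is correct and takes the same approach as the paper: automorphisms preserve order because the nonnegative reals are exactly the squares, they fix $\mathbb{Q}$, and density of $\mathbb{Q}$ then forces every real to be fixed. You also write out the details the paper leaves to the reader, namely the argument that $\mathbb{Q}$ is fixed and the explicit choice of a rational $q$ with $\sigma(x)<q<x$ leading to a contradiction.
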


\begin{proof}
    Note that any automorphism of $\mathbb{R}$ preserves the set of non-negative numbers, and thus the order in $\mathbb{R}$. Indeed, non-negative numbers are preserved since they are exactly the elements of $\mathbb{R}$ which are squares of elements of $\mathbb{R}$.  Next: the rational numbers $\mathbb{Q}$ stay fixed under any automorphism of $\mathbb{R}$. Please provide a 1-line argument for this. Since the order is preserved, and since any irrational number is approximated arbitrarily closely by rational numbers which stay fixed by automorphisms, we conclude that irrational numbers are also fixed under any automorphism of $\mathbb{R}$.
\end{proof}

What can we say about the group of automorphisms of the field of complex numbers $\mathbb{C}$, which for the moment we think of as $\mathbb{R}\oplus\mathbb{R}i$? Any mathematician or physicist will tell that there is a non-trivial such automorphism, called \textit{complex conjugation}
$$ a + bi \mapsto a - bi.$$
This automorphism keeps the real numbers $a + 0i$ fixed, which agrees well with the above theorem. However, a key a question is: are there any other elements of the group $\mathrm{Aut} \mathbb{C}$ of automorphisms of $\mathbb{C}$? Most mathematicians know that the answer is positive, but some may have never asked themselves why. 

Let's start with a small challenge to the reader:

$$
\textrm{Is there any automorphism of the field } \mathbb{C} \textrm{ which sends } \sqrt{2} \textrm{ to} -\sqrt{2}?
$$
Hm...m. Depending on one's background, it could come as a surprise that the answer is YES. Here is how to construct such an automorphism of $\mathbb{C}$. One first considers the subfield $\{ a + b\sqrt{2} \} \subset \mathbb{C}$ where $a, b \in \mathbb{Q}$. This field has the obvious automorphism $\varphi_0 : \{ a + b\sqrt{2} \} \to \{ a + b\sqrt{2} \}$ which acts as $\varphi_0(a + b\sqrt{2}) = a - b\sqrt{2}$. Next, a standard theorem \cite[Ch.V, Thm.~3.8]{Hungerford} tells us that $\varphi_0$ can be extended to an automorphism $\varphi_1$ of the algebraic closure $\overline{\mathbb{Q}}$ of $\mathbb{Q}$. The field $\overline{\mathbb{Q}}$, i.e., the field of all algebraic numbers, is a subfield of $\mathbb{C}$ and a transcendence basis  $\{x_{\alpha}\}$ of $\mathbb{C}$ over $\overline{\mathbb{Q}}$ exists by Zorn's lemma. Recall that a transcendence basis is a maximal set of elements $\{x_{\alpha}\} \subset \mathbb{C}$ such that no polynomial expression involving finitely many elements $x_\alpha$, and having rational coefficients, vanishes except when the coefficients are all equal to zero. 
The minimal subfield of $\mathbb{C}$ containing $\overline{\mathbb{Q}}$ and $\{x_\alpha\}$ is denoted by $\overline{\mathbb{Q}}(\{x_{\alpha}\})$, and the automorphism $\varphi_1 : \overline{\mathbb{Q}} \to \overline{\mathbb{Q}}$ extends in an obvious way to  $\overline{\mathbb{Q}}(\{x_{\alpha}\})$ by leaving all $x_\alpha$ fixed.
Denoting this automorphism by $\varphi_2$, we use again \cite[Ch.V, Thm.~3.8]{Hungerford} to extend it to an automorphism $\varphi_3$ of the algebraic closure $\overline{\overline{\mathbb{Q}}(\{x_{\alpha}\})}$ of the field $\overline{\mathbb{Q}}(\{x_{\alpha}\})$. This latter field is nothing but $\mathbb{C}$. The so obtained automorphism $\varphi_3: \mathbb{C} \to \mathbb{C}$ sends $\sqrt{2}$ to $-\sqrt{2}$ by definition.
\par
OK, nothing very surprising so far. But where is the subfield $\mathbb{R}$ of $\mathbb{C}$ being sent to by $\varphi_3$? Clearly, $\varphi_3(\mathbb{R}) \neq \mathbb{R}$ since $\mathrm{Aut} \mathbb{R} = \{ \mathrm{id} \}$ and $\varphi_3(\sqrt{2}) = -\sqrt{2}$.
So what is the subfield of $\varphi_3(\mathbb{R}) \subset \mathbb{C}$? Of course it is isomorphic to $\mathbb{R}$, but where is it in the usual Cartesian picture $\mathbb{C} = \mathbb{R} \oplus \mathbb{R}i = \mathbb{R}^2$? Well, it is spread all over $\mathbb{R}^2$, everwhere dense in the usual topology of $\mathbb{R}^2$. Why? Because $\varphi_3(\mathbb{R})$ necessarily contains all vectors of the form $a + bx$ where $a$ and $b$ are rational numbers and $x$ is a nonzero complex number (vector in $\mathbb{R}^2$) such that $x \in \varphi_3(\mathbb{R}) \backslash \mathbb{R}$. Interesting, right?

\section{Definition of $\mathbb{C}$, and copies of $\mathbb{R}$ within $\mathbb{C}$}

In order to go deeper into the picture, we need to lay our cards on the table. Defining $\mathbb{C}$ as the field $\{a + bi\}$ for $a,b \in \mathbb{R}$ is perfectly valid, but automatically plants a copy of the real numbers inside of $\mathbb{C}$. If one does not think further, one may naively perceive this copy as canonical, or natural if you prefer. However, the following iconic theorem yields a definition of the field of complex numbers without any reference to real numbers. 

\begin{theorem} \label{Steinitz}
    (Steinitz \textnormal{\cite{Steinitz}}, see also \textnormal{\cite[Ch.VI, Thm.~1.12]{Hungerford}}) Any algebraically closed field of characteristic 0 and of cardinality $2^{\aleph_0}$ is isomorphic to the field of complex numbers. 
\end{theorem}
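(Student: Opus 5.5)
The plan is to reduce the statement to the fact that an algebraically closed field is determined up to isomorphism by its characteristic and its transcendence degree over the prime field, and then compute that transcendence degree by counting. Let $K$ be an algebraically closed field of characteristic $0$ with $|K| = 2^{\aleph_0}$. Since the characteristic is $0$, $K$ contains a copy of $\mathbb{Q}$. As in the construction of $\varphi_3$ above, Zorn's lemma gives a transcendence basis $\{y_\beta\}_{\beta\in B}$ of $K$ over $\mathbb{Q}$, and $K$ is algebraic over $\mathbb{Q}(\{y_\beta\})$. Because $K$ is algebraically closed, $K$ is an algebraic closure of the purely transcendental field $\mathbb{Q}(\{y_\beta\})$. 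The same holds for $\mathbb{C}$ with a transcendence basis $\{x_\alpha\}_{\alpha\in A}$ over $\mathbb{Q}$. Note that we can take this basis over $\mathbb{Q}$ rather than $\overline{\mathbb{Q}}$, since algebraic elements contribute nothing.

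The key step is to show $|A| = |B| = 2^{\aleph_0}$. For an infinite index set $B$, the field $\mathbb{Q}(\{y_\beta\})$ has cardinality $\max(\aleph_0,|B|)$, because it is a countable union over finite subsets of $B$ of countable fields, and the set of finite subsets of $B$ has cardinality $|B|$. An algebraic extension $L$ of an infinite field $F$ satisfies $|L| \le |F|$: each element of $L$ is a root of one of the $|F|$ polynomials over $F$, and each polynomial has finitely many roots. Hence $|K| = \max(\aleph_0,|B|)$. If $B$ were finite or countable, $K$ would be countable, contradicting $|K| = 2^{\aleph_0}$. So $|B| = 2^{\aleph_0}$, and the same argument gives $|A| = 2^{\aleph_0}$ because $|\mathbb{C}| = 2^{\aleph_0}$.

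Once $|A| = |B|$, choose a bijection $A\to B$. It extends uniquely to a field isomorphism $\mathbb{Q}(\{x_\alpha\}) \to \mathbb{Q}(\{y_\beta\})$, since both fields are fields of rational functions in algebraically independent variables. Then the theorem \cite[Ch.V, Thm.~3.8]{Hungerford} already used above extends this isomorphism to an isomorphism of algebraic closures. Here we use the version for an isomorphism $F\to F'$ extending to $\overline{F}\to\overline{F'}$, together with uniqueness of the algebraic closure up to isomorphism. This gives $\mathbb{C} \cong K$.

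I expect the main obstacle to be the cardinal arithmetic in the second step. It needs $|B\times B|=|B|$ for infinite $B$, hence that the set of finite subsets of $B$ has cardinality $|B|$, and this uses the axiom of choice. I would quote these standard facts rather than prove them. The extension step is routine given the cited theorem. One subtle point is that the extension produced there is an embedding $\overline{F}\to\overline{F'}$. Its surjectivity follows because $\overline{F'}$ is algebraic over the image, and the image is algebraically closed.
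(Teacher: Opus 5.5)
Your proof is correct and is the standard argument. The paper does not prove this theorem itself but only cites Steinitz and Hungerford (Ch.~VI, Thm.~1.12), and the standard proof behind that citation is yours: get $\mathrm{trdeg}_{\mathbb{Q}}K=2^{\aleph_0}$ from $|K|=\max(\aleph_0,\mathrm{trdeg}_{\mathbb{Q}}K)$, then extend an isomorphism of the purely transcendental subfields to the algebraic closures via Ch.~V, Thm.~3.8, the same device the paper uses in its proofs about $\mathrm{Aut}\,\mathbb{C}$ and about real closed subfields. One wording slip: $\mathbb{Q}(\{y_\beta\})$ is a union indexed by the $|B|$ finite subsets of $B$, not a countable union, though the bound $|B|\cdot\aleph_0=|B|$ you draw from it is right.
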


Characteristic 0 means that the element $1 + 1 + \cdots +1 $ never equals 0 (this can happen for instance in finite fields) and $2^{\aleph_0}$ stands for the cardinality of the set of all subsets of a countable set.
\par
If we think of $\mathbb{C}$ as an algebraically closed field satisfying the two conditions in Theorem 3.1, then the question of how many copies of the field $\mathbb{R}$ are "hiding" within $\mathbb{C}$ looks totally different. And since, as we already demonstrated, the group $\mathrm{Aut} \mathbb{C}$ does not fix any such copy, there is no reason to expect that some copy of $\mathbb{R}$ within $\mathbb{C}$ is more canonical than another.
\par
In general, a field extension $F \subset \mathbb{C}$ has two essential invariants, namely the degree of $\mathbb{C}$ over $F$ and the transcendence degree $\mathrm{trdeg}_F \mathbb{C} $. By definition, the \textit{degree} of $\mathbb{C}$ over $F$, $[\mathbb{C}:F]$, is just the dimension of $\mathbb{C}$ as a vector space over $F$, and the \textit{transcendence degree} $\mathrm{trdeg}_F \mathbb{C}$ is the cardinality of a transcendence basis of $\mathbb{C}$ over $F$. If $\mathrm{trdeg}_F \mathbb{C} \neq 0$ then $[ \mathbb{C}: F ]$ is necessarily infinite. Moreover, both $[ \mathbb{C}: F ]$ and
$\mathrm{trdeg}_F \mathbb{C}$ can be finite, countable, or uncountable (at most equal to $2^{\aleph_0}$). The simplest case is when $[ \mathbb{C}: F ]$ is finite and consequently $\mathrm{trdeg}_F \mathbb{C} = 0$. This case is covered by another iconic theorem.

\begin{theorem}
    (Artin-Schreier \textnormal{\cite{ArtinSchreier}}, see also \textnormal{\cite[Ch.XI]{Lang},\cite{ConradArtinSchreier}}). Let $F\subset\mathbb{C}$ be a finite field extension i.e., $[\mathbb{C}:F] < \infty$. Then $[\mathbb{C}:F] = 2$, and consequently $\mathbb{C} = F \oplus Fi = \{a + bi\} \textrm{ for } a,b \in F $.
\end{theorem}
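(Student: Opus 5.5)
The plan is the classical Artin--Schreier argument, carried out inside $\mathbb{C}$. Let $F\subset\mathbb{C}$ with $1<[\mathbb{C}:F]<\infty$; the case $F=\mathbb{C}$ is excluded implicitly. Put $n=[\mathbb{C}:F]$. Since $F$ has characteristic $0$, the extension $\mathbb{C}/F$ is separable. Because $\mathbb{C}$ is algebraically closed, it is also normal over $F$, so $\mathbb{C}/F$ is Galois with group $G$ of order $n$.

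First I show $i\notin F$, so that $F(i)=F\oplus Fi$ is a subfield of degree $2$ over $F$. If $i\in F$, pick a prime $p\mid n$ and an element of order $p$ in $G$, with fixed field $K$; then $[\mathbb{C}:K]=p$ and $i\in K$. If $p\neq 2$, then $K$ contains the $p$-th roots of unity, because their degree over $\mathbb{Q}(i)$ divides $p-1$ while $[\mathbb{C}:K]=p$. Kummer theory then gives $\mathbb{C}=K(a^{1/p})$ for some $a\in K$. Take $b\in\mathbb{C}$ with $b^{p^2}=a$; the norm identity $N_{\mathbb{C}/K}(b^p)=\zeta a$ forces $a$ to be a $p$-th power in $K$ up to a root of unity. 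Since roots of unity of order prime to $p$ are $p$-th powers, this gives the contradiction. If $p=2$, then $\mathbb{C}=K(\sqrt a)$, and writing $\sqrt{\sqrt a}=x+y\sqrt a$ and computing norms gives $a=-c^2$ with $c\in K$. Hence $\sqrt a=\pm ic\in K$, which is a contradiction.

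The main obstacle is exactly this step: ruling out cyclic extensions of prime degree $p$ for fields containing enough roots of unity, in particular the case $p$ odd and the bookkeeping when $-1$ is or is not a square. I would first try the norm argument above.

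Once $i\notin F$ is established, I show that $F$ is real closed in the sense that every sum of squares in $F$ is a square and $F$ has no odd-degree proper extensions. Sums of squares: for $a,b\in F$, write $a+bi=(c+di)^2$ with $c,d\in F(i)$, and after applying Galois arguments we may take $c,d\in F$. Taking norms gives $a^2+b^2=(c^2+d^2)^2$, so $F$ is Pythagorean and $-1$ is not a sum of squares. Hence $F$ is formally real and admits an ordering. For odd degree, take a Sylow $2$-subgroup $P$ of $G$. Its fixed field has odd degree over $F$, hence equals $F$ by the primitive element theorem plus the fact that odd-degree polynomials have roots. That last fact needs $F$ real closed, which I expect to obtain by the same norm trick. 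So $G$ is a $2$-group.

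If $|G|>2$, the subgroup $\mathrm{Gal}(\mathbb{C}/F(i))$ is a nontrivial $2$-group, so it has an index-$2$ subgroup. This yields a quadratic extension of $F(i)$, which is impossible since every element of $F(i)$ has a square root in $F(i)$. The reason is that square roots of $a+bi$ are given by the usual formula using square roots in $F$ of the positive elements $\sqrt{a^2+b^2}\pm a$. Therefore $n=2$ and $\mathbb{C}=F(i)=F\oplus Fi$.
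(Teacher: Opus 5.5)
\textbf{The gap.} The paper gives no proof of this theorem; it only cites Artin--Schreier, Lang and Conrad. The argument there is essentially your first step, and that step is sound. For odd $p$ it is even simpler than you make it: $N_{\mathbb{C}/K}(a^{1/p})=a$, so $a=N_{\mathbb{C}/K}(b)^p$ exactly and no root-of-unity bookkeeping is needed.

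The genuine gap is in everything after that step. There the properties of $F$ you need are assumed rather than derived from $[\mathbb{C}:F]<\infty$.
\begin{itemize}
\item Writing $a+bi=(c+di)^2$ with $c,d\in F(i)$ presupposes that $F(i)$ has no quadratic extension inside $\mathbb{C}$. That is exactly what your last paragraph sets out to prove, using this very step, so the argument is circular.
\item The claim that odd-degree polynomials have roots in $F$ is left as an unspecified ``norm trick''. An ordering alone does not give it, since there is no intermediate value theorem over an abstract $F$; $\mathbb{Q}$ is ordered and $x^3-2$ has no rational root.
\item The ``usual formula'' for $\sqrt{a+bi}$ needs every positive element of $F$ to be a square in $F$. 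Your notion of real closed (Pythagorean, no odd-degree extensions) does not give this. The field $F_0=\bigcup_{n\ \mathrm{odd}}\mathbb{R}((t^{1/n}))$ is formally real, Pythagorean, and has no proper odd-degree extension. Yet $t$ is positive in an ordering but not a square, and $\sqrt{t}\notin F_0(i)$.
\end{itemize}

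\textbf{The fix.} Run your first step for $L=F(i)$ instead of $F$. Since $i\in L$ and $[\mathbb{C}:L]<\infty$, suppose $L\neq\mathbb{C}$. Pick a prime $p$ dividing $[\mathbb{C}:L]$ and an element of order $p$ in $\mathrm{Gal}(\mathbb{C}/L)$. Its fixed field $K$ contains $i$ and satisfies $[\mathbb{C}:K]=p$, which your two cases rule out. Hence $\mathbb{C}=F(i)$. Since $F\neq\mathbb{C}$ (which, as you note, the statement tacitly assumes), $i\notin F$ and $[\mathbb{C}:F]=2$.

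Equivalently, in your last paragraph take a subgroup of order $2$, not of index $2$, of $\mathrm{Gal}(\mathbb{C}/F(i))$ and apply your $p=2$ case. Also, your odd-$p$ case never uses $i$, so it already shows directly that $|G|$ is a power of $2$, with no Sylow argument or odd-degree polynomials needed.

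This is the proof in the cited sources. Real closedness of $F$ is then a consequence rather than an ingredient: for $x\in F$, writing $\sqrt{x}=c+di$ with $c,d\in F$ forces $cd=0$, so $x$ or $-x$ is a square in $F$.
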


Of course, the subfield $\mathbb{R}$ of the standard picture of $\mathbb{C}$ as $\mathbb{R} \oplus \mathbb{R}i$ satisfies the condition of the Artin-Schreier Theorem. But is a general subfield $F\subset \mathbb{C}$ satisfying this condition necessarily isomorphic to $\mathbb{R}$?
In general this is not true! Did you know that?

In order to shed more light on the subfields $F\subset\mathbb{C}$ as in Theorem (3.2), we need to discuss

\section{Real closed fields}

A field $F$ is \textit{real closed} if it is not algebraically closed but $F(i) = F \oplus Fi$ is algebraically closed. See \cite{RealClosedFieldWiki}   for 8 further equivalent definitions of a real closed field, and check also \cite[Ch.XI]{Lang}. An important property of a real closed field $F$ is that it has a canonical linear order compatible with addition and multiplication in  the usual way, and such that the non-negative elements are precisely the squares in $F$. Note that the characteristic of any ordered field is 0 since otherwise 

$$1+1+ \cdots +1 = -1 < 0$$
for some sum $1+1+ \cdots +1$. In addition, an ordered field $F$ is \textit{Archimedean} if for any $x\in F$ there exists a positive integer $n$ such that $x \leqslant n$. 

Of course, the field $\mathbb{R}$ is real closed but it is by far not the only real closed field. Well-known examples of real closed fields are the field $\mathcal{P}$ of Puiseux series and the field $\mathcal{R}$ of Levi-Civita series, see for instance \cite[Prop.~II.8]{Serre} and \cite{Ribenboim1992}. Puiseux series are formal power series $ \sum_{j\geqslant1}a_j t^{k_j}$ where ${k_j}$ is an increasing sequence of rational numbers with bounded denominators, and $a_j \in \mathbb{R}$. For instance, $ t^{-\frac{1}{12}} + \sum_{j \geqslant 1} t^{\frac{2j+1}{5}}$ is a Puiseux series. The Levi-Civita field is larger than the field of Puiseux series and consists of formal power series of the same form as Puiseux series but without the condition that the denominators of the sequence of rational numbers ${{k_j}}$ are bounded.
\par
The fields $\mathcal{P}$ and $\mathcal{R}$ have characteristic 0 and it is easy to show that they both have cardinality equal to $2^{\aleph_0}$. By Steinitz's Theorem, their algebraic closures $\overline{\mathcal{P}} = \mathcal{P}(i)$ and $\overline{\mathcal{R}} = \mathcal{R}(i)$ are both isomorphic to $\mathbb{C}$. So, for instance we have this picture

\begin{figure}[h!]
    \centering
    \caption{\label{diag:P_plot}}
    \begin{tikzpicture} 
        \draw[thick,->] (-0.5,0) -- (3.5,0);
            \node[scale=1.4] [left] at (0,2) {$\mathcal{P}i$};
        \node[scale=1.26]  at (3.55,1.8) {$ \mathbb{C} = \{a + bi\}$ for $a, b \in \mathcal{P}$};
        \draw[thick,->] (0,-0.5) -- (0,3.5);
            \node[scale=1.4] [below] at (2,0) {$\mathcal{P}$};
    \end{tikzpicture}

\end{figure}
\noindent
In other words, $\mathbb{C}$ is represented not by the plane $\mathbb{R}^2$ but the plane $\mathcal{P}^2$ where $\mathcal{P}$ is the field of Puiseux series which has $\mathbb{R}$ as proper subfield.
\par
It gets even more exciting. The representation of $\mathbb{C}$ as  $\mathcal{P} \oplus \mathcal{P}i$ yields immediately an automorphism of $\mathbb{C}$

$$ a + bi \mapsto a - bi \textrm{ for } a, b \in \mathcal{P},$$
and this automorphism is not usual complex conjugation since the fixed point field is $\mathcal{P}$ and not $\mathbb{R}$. Though, wait a minute: is it clear that $\mathcal{P}$ is not isomorphic
to $\mathbb{R}$? Yes, this is clear because $\mathcal{P}$ contains a proper subfield isomorphic to $\mathbb{R}$, and $\mathbb{R}$ does not have such a subfield (prove this!).

In addition, as we claimed before, the field $\mathcal{P}$ is real closed hence has a  canonical linear order. Let's take a look at this order. Let $n$ be a positive integer. Note first that the element $y = 1  + nt + n^2 t^2 + \cdots + n^kt^k+ \cdots$ is a square in $\mathcal{P}$ and is thus positive. One can prove this by computing a square root of $y$ explicitly by recursion: 
$ \sqrt{y} = 1 + \frac{n}{2}t + \frac{3}{8}n^2 t^2 + \cdots $. Therefore, 

$$
\frac{1}{1-nt}= y > 0 \implies 1 - nt > 0 \implies t < \frac{1}{n} \:\: \forall n.
$$
On the other hand, $t > 0$ since $t$ is also a square in $\mathcal{P}$ ($t = (t^{\frac{1}{2}})^2$), and we obtain

$$ 0 < t < \frac{1}{n} \qquad \forall n.$$
This means that the field $\mathcal{P}$ is non-Archimedean (!), which is of course well known.
\par
We arrive to the following conclusion: if $F$ is any real closed field of cardinality $2^{\aleph_0}$, then its algebraic closure $\overline{F} = F \oplus Fi$ is isomorphic to $\mathbb{C}$, and we have \Cref{diag:P_plot} with $\mathcal{P}$ replaced by $F$.
\par

Before giving the next example, we recall that any ordered field $F$ has a real closure $\vec{F}$. By definition $\vec{F}$ is a minimal real closed field containing $F$. The proof of existence for $\vec{F}$ is easy: define $\vec{F}$ as a maximal ordered intermediate field $F \subset \vec{F} \subset \overline{F}$ such that the linear order on $\vec{F}$ extends that of $F$. Zorn's Lemma implies that $\vec{F}$ exists, and similarly to algebraic closure, real closure is not unique but is unique up to isomorphism.

We can now give an example of a real closed field $F$ which is a proper subfield of $\mathbb{R}$ and still $\mathbb{C} \cong F \oplus F i$. In this way, the canonical ordering on $F$ will be Archimedean but $F$ will not be isomorphic to $\mathbb{R}$. Fix a transcendence basis $\{x_a\}$ of $\mathbb{R}$ over $\mathbb{Q}$ so that the transcendental number $\pi$ equals $x_{\alpha_0}$ for some $\alpha_0$. Consider the linearly ordered field $\mathbb{Q}(\{x_\beta\})$ where $\beta$ runs over all indices of the basis $\{x_\alpha\}$ except $\alpha_0$. Denote by $\mathbb{R}_{\backslash\{\pi\}}$ a real closure of $\mathbb{Q}(\{x_\beta\})$. Then the algebraic closure $\overline{\mathbb{R}_{\backslash\{\pi\}}}$ of $\mathbb{R}_{\backslash\{\pi\}}$ is isomorphic to $\mathbb{C}$ and $\mathbb{C} \simeq \mathbb{R}_{\backslash\{\pi\}} \oplus \mathbb{R}_{\backslash\{\pi\}}i$.
We now have the picture

\begin{figure}[h!]
    \centering
    \caption{\label{diag2}}
    \begin{tikzpicture}
        \draw[thick,->] (-0.5,0) -- (3.5,0);
            \node[scale=1.3] [left] at (0,2) {$\mathbb{R}_{\backslash\{\pi\}}i$};
        \node[scale=1.2]  at (3.85,1.5) {$\mathbb{C} = \{a + bi \}$ for $a, b \in \mathbb{R}_{\backslash\{\pi\}}$};
        \draw[thick,->] (0,-0.5) -- (0,3.5);
            \node[scale=1.3] [below] at (2,0) {$\mathbb{R}_{\backslash\{\pi\}}$};
    
    \end{tikzpicture}
\end{figure}
\noindent
where $\mathbb{C}$ is represented as the cartesian product $(\mathbb{R}_{\backslash\{\pi\}})^2$ for a subfield $\mathbb{R}_{\backslash\{\pi\}}$ of $\mathbb{R}$ in which the transcendental number $\pi$ is missing. Hence, for every nonzero field homomorphism $ \psi : \mathbb{R} \to \mathbb{C}$, the number $\psi(\pi)$ will not be on the $\mathbb{R}_{\backslash\{\pi\}}$ - axis and will have the form $a + bi$ for $a,b \in \mathbb{R}_{\backslash\{\pi\}}, b \neq 0$!
\newline
\section{Aut$\mathbb{C}$ revisited}

Let's come back to the question of how many copies of the field $\mathbb{R}$ are there in $\mathbb{C}$. This is closely related to the structure of the group Aut$\mathbb{C}$ since it moves around the subfields of $\mathbb{C}$, in particular those isomorphic to $\mathbb{R}$.

\begin{theorem}
    The group $\textnormal{Aut} \mathbb{C}$ is "huge": it has cardinality $2^{2^{\aleph_0}}$.
\end{theorem}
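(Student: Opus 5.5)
Two inequalities prove the statement. The upper bound is immediate: every automorphism of $\mathbb{C}$ is a map $\mathbb{C}\to\mathbb{C}$. The set of all such maps has cardinality $(2^{\aleph_0})^{2^{\aleph_0}} = 2^{\aleph_0\cdot 2^{\aleph_0}} = 2^{2^{\aleph_0}}$, so $|\mathrm{Aut}\,\mathbb{C}|\leqslant 2^{2^{\aleph_0}}$.

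For the lower bound I will reuse the extension scheme that produced $\varphi_3$ above, but now exploit the transcendence basis instead of $\sqrt{2}$. Fix a transcendence basis $\{x_\alpha\}_{\alpha\in A}$ of $\mathbb{C}$ over $\overline{\mathbb{Q}}$. Its cardinality is $|A| = 2^{\aleph_0}$. Indeed, if $A$ were countable (or finite), then the field $\overline{\mathbb{Q}}(\{x_\alpha\})$ would be countable, and so would its algebraic closure. But that algebraic closure is $\mathbb{C}$, which is uncountable. More precisely, $|\overline{K}| = \max(\aleph_0, |K|)$ and $|\overline{\mathbb{Q}}(\{x_\alpha\})| = \max(\aleph_0,|A|)$, which forces $|A| = 2^{\aleph_0}$. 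This cardinality bookkeeping is the main point requiring care; the rest is formal.

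Now let $\pi: A\to A$ be any bijection. The rule $x_\alpha\mapsto x_{\pi(\alpha)}$, together with the identity on $\overline{\mathbb{Q}}$, defines an automorphism of the purely transcendental extension $\overline{\mathbb{Q}}(\{x_\alpha\})$. This works because the $x_\alpha$ are algebraically independent, so permuting them preserves all polynomial relations and extends to rational functions. As in the construction of $\varphi_3$, \cite[Ch.V, Thm.~3.8]{Hungerford} extends this automorphism to an automorphism $\sigma_\pi$ of the algebraic closure, namely $\mathbb{C}$. If $\pi\neq\pi'$, then $\sigma_\pi$ and $\sigma_{\pi'}$ differ on some $x_\alpha$, so the assignment $\pi\mapsto\sigma_\pi$ (after choosing one extension for each $\pi$) is injective. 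Hence $|\mathrm{Aut}\,\mathbb{C}|\geqslant |\mathrm{Sym}(A)|$.

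It remains to see that $|\mathrm{Sym}(A)| = 2^{|A|}$ for infinite $A$. Since $|A\times\{0,1\}| = |A|$, identify $A$ with $A\times\{0,1\}$. To each subset $S\subset A$ associate the permutation that swaps $(a,0)\leftrightarrow(a,1)$ for $a\in S$ and fixes all other elements. Distinct subsets give distinct permutations, so $|\mathrm{Sym}(A)|\geqslant 2^{|A|} = 2^{2^{\aleph_0}}$. Combined with the upper bound, Schröder–Bernstein gives $|\mathrm{Aut}\,\mathbb{C}| = 2^{2^{\aleph_0}}$. In fact the involutions just described already suffice: all $\sigma_\pi$ constructed from them are distinct automorphisms of $\mathbb{C}$.
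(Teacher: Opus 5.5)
Your proof is correct and follows the paper's route: permute a transcendence basis of cardinality $2^{\aleph_0}$, extend via \cite[Ch.V, Thm.~3.8]{Hungerford}, and bound above by the set of all maps $\mathbb{C}\to\mathbb{C}$. The differences are minor. You take the basis over $\overline{\mathbb{Q}}$ rather than over $\mathbb{Q}$, and you make explicit the facts the paper leaves implicit: $|A| = 2^{\aleph_0}$, $|\mathrm{Sym}(A)| \geqslant 2^{|A|}$, and injectivity of $\pi\mapsto\sigma_\pi$.
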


\begin{proof}
    Fix a transcendence base $\{x_\alpha\}$ of $\mathbb{C}$ over $\mathbb{Q}$. Then every element of the permutation group of the set $\{x_\alpha\}$ defines an automorphism of the field $\mathbb{Q}(\{x_\alpha\})$. Any such automorphism extends to an automorphism of $\overline{\mathbb{Q}(\{x_\alpha\})} = \mathbb{C}$ by \cite[Ch.V, Thm.3.8]{Hungerford}. 
    \par
    Since the permutation group of a set with cardinality $2^{\aleph_0}$ has cardinality $2^{2^{\aleph_0}}$, we see that the group Aut$\mathbb{C}$ has cardinality at least $2^{2^{\aleph_0}}$. On the other hand, the set of all maps from $\mathbb{C}$ to $\mathbb{C}$ has cardinality $2^{2^{\aleph_0}}$, so the claim follows.
\end{proof}

Next we have 

\begin{theorem} \label{realclosedThm}
    Let $F$ and $F'$ be real closed subfields of $\mathbb{C}$ such that $F' \simeq F$ and  $\textnormal{trdeg}_{F'}\: \mathbb{C} = \textnormal{trdeg}_{F}\: \mathbb{C}$. Then $\sigma(F) = F'$ for some $\sigma \in $ \textnormal{Aut}$\,\mathbb{C}$.
\end{theorem}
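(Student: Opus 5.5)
We extend a given isomorphism $\tau: F \to F'$ to all of $\mathbb{C}$ in three steps, exactly as in the construction of $\varphi_3$ in Section 2.

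First we extend to the algebraic closures. Since $F$ is real closed, $F(i)=F\oplus Fi$ is algebraically closed. It is therefore the algebraic closure of $F$ inside $\mathbb{C}$, and likewise $F'(i)$ is the algebraic closure of $F'$. Both $\tau(i)$ and $i$ are roots of $x^2+1$ in $F'(i)$, so we may set
$$\tau_1(a+bi)=\tau(a)+\tau(b)i \qquad (a,b\in F).$$
This is clearly a field isomorphism $\tau_1: F(i)\to F'(i)$; alternatively, \cite[Ch.V, Thm.~3.8]{Hungerford} gives such an extension directly.

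Next we match transcendence bases. Let $\{x_\alpha\}_{\alpha\in A}$ be a transcendence basis of $\mathbb{C}$ over $F$ and $\{y_\beta\}_{\beta\in B}$ one over $F'$. By hypothesis $|A|=|B|$, so we fix a bijection $\beta:A\to B$. A transcendence basis over $F$ is also one over $F(i)$, because $F(i)$ is algebraic over $F$; the same holds for $F'$. Hence the fields $F(i)(\{x_\alpha\})$ and $F'(i)(\{y_\beta\})$ are purely transcendental over $F(i)$ and $F'(i)$ respectively. We define $\tau_2$ on $F(i)(\{x_\alpha\})$ by applying $\tau_1$ to the coefficients and sending $x_\alpha\mapsto y_{\beta(\alpha)}$. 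This is well defined and an isomorphism, since by algebraic independence both fields are isomorphic to rational function fields in the same number of variables.

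Finally we extend to $\mathbb{C}$. By maximality of the transcendence bases, $\mathbb{C}$ is algebraic over both $F(i)(\{x_\alpha\})$ and $F'(i)(\{y_\beta\})$. Since $\mathbb{C}$ is algebraically closed, it is an algebraic closure of each. The extension theorem \cite[Ch.V, Thm.~3.8]{Hungerford} therefore lets us extend $\tau_2$ to an embedding $\sigma:\mathbb{C}\to\mathbb{C}$ whose image is an algebraic closure of $F'(i)(\{y_\beta\})$ inside $\mathbb{C}$. Any such image is all of $\mathbb{C}$, because $\mathbb{C}$ is algebraic over $F'(i)(\{y_\beta\})$ and the image $\sigma(\mathbb{C})$ is algebraically closed. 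Thus $\sigma\in\mathrm{Aut}\,\mathbb{C}$ and $\sigma(F)=\tau(F)=F'$.

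The step that needs the most care is the surjectivity in this last step. We only know that the image is algebraically closed and that $\mathbb{C}$ is algebraic over it, and we must deduce that the image is all of $\mathbb{C}$. This follows because every element of $\mathbb{C}$ is a root of a polynomial with coefficients in the algebraically closed field $\sigma(\mathbb{C})$, so it lies in $\sigma(\mathbb{C})$.

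We also need to note where the hypothesis $\mathrm{trdeg}_F\mathbb{C}=\mathrm{trdeg}_{F'}\mathbb{C}$ is used. Because $F\simeq F'$, the Steinitz-type cardinality argument alone cannot force the two transcendence degrees to agree, so the hypothesis is genuinely required to build the bijection of bases in the second step.
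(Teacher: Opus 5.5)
Your proof is correct and has the same skeleton as the paper's. Both use the hypothesis on transcendence degrees to match bases, then extend by the isomorphism-extension theorem for algebraic closures. The difference is the field you extend from, and it matters.

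The paper extends only the isomorphism $\mathbb{Q}(x_\alpha,x_{\overline{\alpha}})\to\mathbb{Q}(x'_\alpha,x'_{\overline{\alpha}})$, where $\{x_\alpha\}$ is a transcendence base of $F$ over $\mathbb{Q}$. It then asserts that the resulting automorphism carries $F$ to $F'$, because each is ``the'' real closure of $\mathbb{Q}(x_\alpha)$, resp.\ $\mathbb{Q}(x'_\alpha)$, in $\mathbb{C}$. Such real closures are unique only up to isomorphism, not as subfields of $\mathbb{C}$, so an arbitrary extension need not do this. For example, take $F=F'=\mathbb{R}$ and $\tau=\mathrm{id}$. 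Any automorphism of $\mathbb{C}$ fixing $\mathbb{Q}(\{x_\alpha\})$ and sending $\sqrt2$ to $-\sqrt2$ is a legitimate extension, yet it moves $\mathbb{R}$, because $\mathrm{Aut}\,\mathbb{R}$ is trivial.

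You instead extend $\tau$ itself: first to $F(i)=F\oplus Fi$, then to the purely transcendental extension of $F(i)$ generated by your basis. Hence $\sigma|_F=\tau$, and $\sigma(F)=F'$ is automatic. This is precisely the repair the paper's argument needs, and it costs nothing extra.

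One small slip: $\tau(i)$ is undefined, since $i\notin F$. Your formula for $\tau_1$ is nevertheless a well-defined isomorphism, for three reasons:
\begin{itemize}
\item $-1$ is not a square in the ordered fields $F$ and $F'$, so $F(i)=F\oplus Fi$ and $F'(i)=F'\oplus F'i$ are direct sums;
\item $i^2=-1$ in both fields;
\item $\tau(-1)=-1$.
\end{itemize}
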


\begin{proof}
     Choose a transcendence base $\{x_{\alpha}\}$ of $F$ over $\mathbb{Q}$ and a transcendence base $\{x_{\overline{\alpha}}\}$ of $\mathbb{C}$ over $F$. Fix an isomorphism $F \overset{\sim}{\to} F'$ (this isomorphism is already order preserving!) and use it to transfer the base $\{x_{\alpha}\}$ to $F'$. Let this transferred base be $\{x'_{\alpha}\}$, and let $\{x'_{\overline{\alpha}}\}$ be a transcendence base of $\mathbb{C}$ over $F'$ . We use here the condition $\textrm{trdeg}_F \mathbb{C} = \textrm{trdeg}_{F'} \mathbb{C}$ in order to be able to parametrize the bases $\{x_{\overline{\alpha}}\}$ and $\{x'_{\overline{\alpha}}\}$ by the same index set.
    \par
    Consider now the isomorphism
    $$
        \mathbb{Q}(x_\alpha, x_{\overline{\alpha}}) \mapsto \mathbb{Q}(x'_\alpha, x'_{\overline{\alpha}})
    $$
    sending $x_\alpha$ to $x'_\alpha$ and $x_{\overline{\alpha}}$ to $ x'_{\overline{\alpha}}$, and extend it by Theorem \cite[Ch.V, Thm.~3.8]{Hungerford} to an isomorphism of the common algebraic closure $\mathbb{C}$ of $\mathbb{Q}(x_\alpha, x_{\overline{\alpha}})$ and $\mathbb{Q}(x'_\alpha, x_{\overline{\alpha}})$. This automorphism carries $F$ to $F'$ since $F$ (respectively, $F'$) is the real closure of the field $\mathbb{Q}(x_\alpha)$ (respectively, of $\mathbb{Q}(x'_\alpha))$ in $\mathbb{C}$.
\end{proof}

\begin{corollary}
        There are $2^{2^{\aleph_0}}$ different subfields $F\subset \mathbb{C}$ such that $F$ is isomorphic to $\mathbb{R}$ and $[\mathbb{C}: F ] = 2$.
\end{corollary}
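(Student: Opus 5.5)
The approach is an orbit–stabilizer count under the action of $\mathrm{Aut}\,\mathbb{C}$ on subfields of $\mathbb{C}$, starting from the standard copy $\mathbb{R}\subset\mathbb{C}$. Every image $\sigma(\mathbb{R})$ for $\sigma\in\mathrm{Aut}\,\mathbb{C}$ is a subfield isomorphic to $\mathbb{R}$ with $[\mathbb{C}:\sigma(\mathbb{R})]=[\mathbb{C}:\mathbb{R}]=2$. So it suffices to show that the orbit of $\mathbb{R}$ has cardinality $2^{2^{\aleph_0}}$.

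The upper bound is immediate. There are at most $2^{2^{\aleph_0}}$ subsets of $\mathbb{C}$, since $|\mathbb{C}|=2^{\aleph_0}$.

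For the lower bound, I would look at the stabilizer $H=\{\sigma\in\mathrm{Aut}\,\mathbb{C} : \sigma(\mathbb{R})=\mathbb{R}\}$. Any $\sigma\in H$ restricts to an automorphism of $\mathbb{R}$, which is the identity by the theorem that $\mathrm{Aut}\,\mathbb{R}$ is trivial. Then $\sigma$ is determined by $\sigma(i)=\pm i$, because $\mathbb{C}=\mathbb{R}\oplus\mathbb{R}i$. Hence $H=\{\mathrm{id},\text{complex conjugation}\}$ has order $2$. The orbit of $\mathbb{R}$ is in bijection with the coset space $\mathrm{Aut}\,\mathbb{C}/H$. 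By the theorem that $|\mathrm{Aut}\,\mathbb{C}|=2^{2^{\aleph_0}}$, and since each coset has two elements, $|\mathrm{Aut}\,\mathbb{C}| = 2\cdot|\mathrm{Aut}\,\mathbb{C}/H|$. For infinite cardinals this forces $|\mathrm{Aut}\,\mathbb{C}/H| = 2^{2^{\aleph_0}}$. This gives $2^{2^{\aleph_0}}$ distinct subfields $\sigma(\mathbb{R})$.

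Theorem \ref{realclosedThm} is not needed for the count, although it identifies the orbit with the set of all real closed $F\simeq\mathbb{R}$ in $\mathbb{C}$ with $\mathrm{trdeg}_F\mathbb{C}=0$. By the Artin--Schreier theorem, this is exactly the set of such $F$ of index $2$.

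The only delicate step is the stabilizer computation. It relies on triviality of $\mathrm{Aut}\,\mathbb{R}$ to pin down $\sigma|_{\mathbb{R}}$, together with the fact that the unique field structure makes $\sigma(i)$ a root of $x^2+1$. The remaining step is plain cardinal arithmetic, namely $2\kappa=\kappa$ for infinite $\kappa$.
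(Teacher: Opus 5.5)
Your proof is correct and is essentially the paper's argument. Both show that the only automorphisms of $\mathbb{C}$ preserving $\mathbb{R}$ are the identity and complex conjugation, so that $\sigma_1(\mathbb{R})=\sigma_2(\mathbb{R})$ exactly when $\sigma_1\in\{\sigma_2,\sigma_2\circ\overline{(\cdot)}\}$, and both then count using $|\mathrm{Aut}\,\mathbb{C}|=2^{2^{\aleph_0}}$; your orbit--stabilizer phrasing additionally makes explicit the cardinal arithmetic $2\kappa=\kappa$ and the upper bound from $|\mathbb{C}|=2^{\aleph_0}$, which the paper leaves implicit.
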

\begin{proof}
        Represent $\mathbb{C}$ as $\mathbb{R}^2 = \mathbb{R} \oplus \mathbb{R}i$. The only automorphism of $\mathbb{C}$ which preserves the fixed subfield $\mathbb{R}$ is the complex conjugation $\overline{(\cdot)}$. The image of $\mathbb{R}$ under any other non-trivial automorphism of $\mathbb{C}$ is a subfield $F$ isomorphic to $\mathbb{R}$ but not coinciding with $\mathbb{R}$. Moreover, the images of $\mathbb{R}$ under any two such automorphisms $\sigma_1 \neq \sigma_2$ coincide if and only if $\sigma_1 = \sigma_2 \circ \overline{(\:\:\:)}.$ Otherwise $\sigma_1 \circ \sigma^{-1}_2$ would induce a non-trivial automorphism of $\mathbb{R}$. Since there are $2^{2^{\aleph_0}}$ elements of the group Aut$\mathbb{C}$ no two of which are related by the equality $\sigma_1 = \sigma_2 \circ \overline{(\:\:\:)}$,  the statement follows.
    \end{proof}

    The above corollary allows one to finally part with the illusion (if one ever had it) that there is a distinct copy of $\mathbb{R}$ in $\mathbb{C}$.

\section{Defining complex conjugation}

Here is an invariant definition of complex conjugation: this is an element $\sigma \in$ Aut$\mathbb{C}$ satisfying $\sigma^2 = id$ (in other words, an \textit{involution} $\sigma \in$ Aut$\mathbb{C}$) and the additional condition that the subfield of fixed points of $\sigma$ is isomorphic to $\mathbb{R}$.
\par
Such a definition accounts for all copies of $\mathbb{R}$ in $\mathbb{C}$ with $\textrm{trdeg}_\mathbb{R} \mathbb{C} = 0$. The fixed points of a general involution $\sigma \in$ Aut$\mathbb{C}$ can be any real closed subfield $F \subset \mathbb{C}$ with $[\mathbb{C} : F] =2,$ and $F$ can be Archimedean as well as non-Archimedean.

\section{One more surprising connection}

Let us now ask what is the relation of the field of complex numbers $\mathbb{C}$ with A. Robinson's field of non-standard real numbers $^*\mathbb{R}$? Most mathematicians are aware that such a field exists but have not taken a deeper look at its definition. The term "non-standard analysis" is completely established by now, and the emphasis is on "analysis", while the definition of the field $^*\mathbb{R}$ is totally within the framework of standard algebra textbooks. Regrettably, it is missing in them.
\par
Here is a brief outline of the definition of $^*\mathbb{R}$. Consider the commutative ring $\mathcal{F}_{\mathbb{R}}$ of functions $f:\mathbb{Z}_{>0} \to \mathbb{R}$ with pointwise multiplication and addition. Let us define a maximal ideal in $\mathcal{F}_\mathbb{R}$ by imposing conditions on the nullset of functions in this ideal. Fix a proper subset $\mathcal{U}$ of the power set of $\mathbb{Z}_{>0}$ such that:
\begin{itemize}
    \item $\mathcal{U}$ contains all subsets of $\mathbb{Z}_{>0}$ with finite complements,
    \item if $A \in \mathcal{U}$ and $A' \in \mathcal{U}$ , then $A\cap A' \in \mathcal{U}$,
    \item if $A \in \mathcal{U}$ and $A'\supset  A, $ then $ A' \in \mathcal{U}$,
    \item for any $A \subset \mathbb{Z}_{>0}$, exactly one of the sets $A$ and $\mathbb{Z}_{>0}\backslash  A$ belongs to $\mathcal{U}$.
\end{itemize}
The existence of $\mathcal{U}$ follows from Zorn's Lemma  (prove!), and $\mathcal{U}$ is known as an \textit{ultrafilter extending Fréchet's filter}, see \cite{Garcia}, \cite{Stroyan-Luxemburg}.
\par
Then one checks immediately that the functions $f \in \mathcal{F}_{\mathbb{R}}$ whose null-set belongs to $\mathcal{U}$ form a maximal ideal $\mathcal{I}_\mathcal{U}$ of $\mathcal{F}_{\mathbb{R}}$. Consequently, the quotient ring $\mathcal{F}_{\mathbb{R}} / \mathcal{I}_\mathcal{U}$ is a field. The fact that, up to isomorphism, the field  $\mathcal{F}_{\mathbb{R}} / \mathcal{I}_\mathcal{U}$ does not depend on the set $\mathcal{U}$ follows from \cite{egh}, and the proof relies on the continuum hypothesis. By definition, $^*\mathbb{R}$ is the field $\mathcal{F}_{\mathbb{R}} / \mathcal{I}_\mathcal{U}$.
\par
The field $^*\mathbb{R}$ is ordered. Indeed, let $f,g \in \mathcal{F}_{\mathbb{R}}$. Then there is a subset $A \subset \mathbb{Z}_{\geqslant 0}$ such that $f(j) \geqslant g(j)$ for $j \in A$ and $f(j) \leqslant g(j)$ for $j \notin A$. One of the subsets $A$ and $\mathbb{Z}_{>0} \backslash A$ belongs to $\mathcal{U}$, and we denote this set by $B$. The inequality on $\mathbb{Z}_{>0} \backslash B$ determines the inequality for the images $\overline{f}, \overline{g} \in   {^*\mathbb{R}}$:

$$
\overline{f} \leqslant \overline{g} \iff f(j) \leqslant g(j) \textrm{ for } \mathbb{Z}_{>0} \backslash B.
$$

And finally, $^*\mathbb{R}$ is real closed and non-Archimedean.
The fact that $^*\mathbb{R}$ is real closed is proved in \cite{Robinson}.
The fact that $^*\mathbb{R}$ is non-Archimedean is obvious. Indeed, the image $\bar{f} \in {^*\mathbb{R}}$ of a function $f \in \mathcal{F}_\mathbb{R}$ with positive values tending to zero for $ j \to \infty$ is a positive infinitesimal in $^*\mathbb{R}$, i.e., $\lim\limits_{j \to \infty} f(j) = 0, f(j) > 0 \quad \forall j \in \mathbb{Z}_{>0}$ implies

$$
0 < \overline{f} < \dfrac{1}{n} \quad \forall n \in \mathbb{Z}_{>0}.
$$

What is the cardinality of $^*\mathbb{R}?$ It clearly is $2^{\aleph_0}$ as $^*\mathbb{R}$ contains $\mathbb{R}$, and the cardinality of $\mathcal{F}_{\mathbb{R}}$ is also $2^{\aleph_0}$. Hence the algebraic closure $\overline{^*\mathbb{R}}$ of $^*\mathbb{R}$ is isomorphic to $\mathbb{C}$, and we obtain the picture 

\begin{figure} [h!]
    \centering
    \caption{\label{diag2}}
    \begin{tikzpicture}
        \draw[thick,->] (-1,0) -- (4.5,0);
            \node[scale=1.3] [left] at (0,2) {$^*\mathbb{R}i$};
        \node[scale=1.2]  at (3.85,1.5) {$\mathbb{C} = \{a + bi \}$ for $a, b \in {^*\mathbb{R}}
        $};
        \fill (-1,0) circle (0.05cm);
        \fill (-0.75,0) circle (0.05cm);
        \fill (-0.5,0) circle (0.05cm);
        \fill (-0.25,0) circle (0.05cm);
        \fill (0,0) circle (0.05cm);
        \fill (0.25,0) circle (0.05cm);
        \fill (0.5,0) circle (0.05cm);
        \fill (0.75,0) circle (0.05cm);
        \fill (1,0) circle (0.05cm);
        \fill (1.25,0) circle (0.05cm);
        \fill (1.5,0) circle (0.05cm);
        \fill (1.75,0) circle (0.05cm);
        \fill (2,0) circle (0.05cm);
        \fill (2.25,0) circle (0.05cm);
        \fill (2.5,0) circle (0.05cm);
        \fill (2.75,0) circle (0.05cm);
        \fill (3,0) circle (0.05cm);
        \fill (3.25,0) circle (0.05cm);
        \fill (3.5,0) circle (0.05cm);
        \fill (3.75,0) circle (0.05cm);
        \fill (4,0) circle (0.05cm);
        \fill (4.25,0) circle (0.05cm);
        \draw[thick,->] (0,-0.5) -- (0,3.5);
            \node[scale=1.3] [below] at (2,0) {$^*\mathbb{R}$};
    \end{tikzpicture},
\end{figure}
\noindent
where the thickened dots of the horizontal axis are the elements of $^*\mathbb{R}$ which belong to $\mathbb{R}$. Had you ever imagined such a picture?

\section{Short word on topologies of $\mathbb{C}$}

For most colleagues doing analysis, the field of complex numbers $\mathbb{C} = \mathbb{R} \oplus \mathbb{R}i \simeq \mathbb{R}^2$ automatically comes with the standard topology from $\mathbb{R}^2$. If one thinks in this way, $\mathbb{C}$ is no longer just a field but a Banach algebra, the norm homomorphism $$\mathbb{C} \to \mathbb{R}_{\geqslant0}, \quad a+bi \mapsto \sqrt{a^2 + b^2}$$ being a part of the definition of $\mathbb{C}$. Moreover, the only continuous automorphism of this Banach algebra is the usual complex conjugation 

$$ a + bi \mapsto a - bi  .$$

However, one of our points above is that the field $\mathbb{C}$ has no canonical structure of a Banach algebra, since for this one needs a copy of $\mathbb{R}$ in $\mathbb{C}$. All copies of $\mathbb{R}$ in $\mathbb{C}$ with $[\mathbb{C}: \mathbb{R}] = 2$ are equally good. Moreover, they define different Banach algebra structures on $\mathbb{C}$ (prove this!), and consequently different topologies on $\mathbb{C}$. 
The fact that any two such copies of $\mathbb{R}$ are conjugate by an automorphism of $\mathbb{C}$ (according to Theorem \ref{realclosedThm}) implies that all these Banach algebra structures, and topologies on $\mathbb{C}$, are isomorphic.
\par
One can extend this thought by defining general norm homomorphisms  $$\mathbb{C} \to F_{\geqslant0}, \quad a + bi \mapsto \sqrt{a^2 + b^2}$$ where $F \subset \mathbb{C}$ is now any given (real closed) subfield with $[\mathbb{C}: F] = 2.$ In particular, there are non-standard norms

$$ 
\mathbb{C} \to {^*\mathbb{R}_{\geqslant0}}
$$
and they induce respective topologies on the field $\mathbb{C}$. With respect to such non-standard norms, there exist infinitesimal complex numbers! The interested reader can think about this further.

\section{Conclusion}

Things look very simple when one builds the field of complex numbers $\mathbb{C}$ from "bottom to top", starting from rational numbers, passing to real numbers (this is the most nontrivial step) and then taking algebraic closure. However, when one learns Steinitz's Theorem which characterizes the field $\mathbb{C}$ without any reference to real numbers, one can study this field from "top to bottom".

In particular, it is interesting to think on how many and what other fields embed into $\mathbb{C}$. The shocking answer is that any field of characteristic $0$ and of cardinality less or equal to $2^{\aleph_0}$ can be realized as a subfield of $\mathbb{C}$. In the usual picture of $\mathbb{C}$ as $\mathbb{R} \oplus \mathbb{R}i \simeq \mathbb{R}^2$ most of these other fields are hidden, and we enjoyed unveiling a few of them. Hopefully the reader enjoyed that too! 

\section*{Aknowledgment}
This note was inspired by discussions with my friend Todor Todorov on the "hidden" subfields of $\mathbb{C}$ arising through the Steinitz Theorem.

    \vspace{0.5cm}

    Ivan Penkov 
    
    Constructor University, 28759 Bremen, Germany

    Email adress: ivanpenkov@yahoo.com
\end{document}